\newtheorem{theorem}{Theorem}[section]
\newtheorem{lemma}[theorem]{Lemma}
\newtheorem{corollary}[theorem]{Corollary}
\newcounter{maintheorem}
\theoremstyle{remark}
\newtheorem{remark}[theorem]{Remark}
\theoremstyle{definition}
\newtheorem{definition}[theorem]{Definition}
\newtheorem{example}[theorem]{Example}
\numberwithin{equation}{section}
\newcommand{\R}{\mathbb{R}}
\newcommand{\N}{\mathbb{N}}
\newcommand{\e}{\varepsilon}
\newcommand{\p}{\varphi}
\newcommand{\si}{\frac{\sigma+1}{\sigma-1}}
\newcommand{\sii}{\frac{2}{\sigma-1}}
\newcommand{\siii}{\frac{2\sigma}{\sigma-1}}
\newcommand{\nn}[1]{{\left\vert\kern-0.25ex\left\vert\kern-0.25ex\left\vert #1 
\right\vert\kern-0.25ex\right\vert\kern-0.25ex\right\vert}}
\renewcommand{\leq}{\leqslant}
\renewcommand{\geq}{\geqslant}
\newcommand{\dif}{\nabla_{xy}}
\DeclareMathOperator{\Vol}{Vol}
\newcounter{smallromans}
\newenvironment{romanenumerate}
{\begin{list}{{\normalfont\textrm{(\roman{smallromans})}}}%
  {\usecounter{smallromans}\setlength{\itemindent}{0cm}%
   \setlength{\leftmargin}{5.5ex}\setlength{\labelwidth}{5.5ex}%
   \setlength{\topsep}{.5ex}\setlength{\partopsep}{.5ex}%
   \setlength{\itemsep}{0.1ex}}}%
{\end{list}}
\newcommand{\bone}{\mathbbm{1}}
\begin{document}
\title[Nonexistence results on graphs]{Nonexistence results for semilinear elliptic \\ equations on weighted graphs}

\author[D.D.~Monticelli]{Dario D. Monticelli}
\address[D.D.~Monticelli]{Politecnico di Milano, Dipartimento di Matematica, Piazza Leonardo da Vinci 32, 20133 Milano, Italy}
\email{dario.monticelli@polimi.it}

\author[F.~Punzo]{Fabio Punzo}
\address[F.~Punzo]{Politecnico di Milano, Dipartimento di Matematica, Piazza Leonardo da Vinci 32, 20133 Milano, Italy}
\email{fabio.punzo@polimi.it}

\author[J.~Somaglia]{Jacopo Somaglia}
\address[J.~Somaglia]{Politecnico di Milano, Dipartimento di Matematica, Piazza Leonardo da Vinci 32, 20133 Milano, Italy}
\email{jacopo.somaglia@polimi.it}

\thanks{The research of the authors has been partially supported by GNAMPA (INdAM -- Istituto Nazionale di Alta Matematica).}

\keywords{Graphs, Laplacian on graphs, Liouville theorem, distance function on graphs, weighted volume, test functions.}
\subjclass[2020]{35A01, 35A02, 35B53, 35J05, 35J61, 35R02.}
\date{\today}

\begin{abstract} We study semilinear elliptic inequalities with a potential on infinite graphs. Given a distance on the graph, we assume an upper bound on its Laplacian, and a growth condition on a suitable weighted volume of balls. Under such hypotheses, we prove that the problem does not admit any nonnegative nontrivial solution. We also show that our conditions are optimal.   
\end{abstract}
\maketitle


\section{Introduction}
One of the most interesting and investigated class of elliptic
inequalities, in view of its relevance in theoretical questions and applications, is
\begin{equation}\label{EQ_lapl}
 \Delta u+v(x)u^\sigma \leq 0,
\end{equation}
both on $\mathbb{R}^N$ and on general Riemannian manifolds $(M,g)$,
where $\Delta$ denotes the Laplace-Beltrami operator associated to
the metric and $\sigma>1$. 

The study of this problem has a very rich history, starting with the seminal works of Gidas
\cite{Gidas} and Gidas-Spruck \cite{GidasSpruck} in the Euclidean setting. It is shown  that $u\equiv 0$ is the only nonnegative
solution of equation \eqref{EQ_lapl} whenever $1<\sigma\leq \frac{N}{N-2}$, in case $v\equiv 1$ and $N\geq 3$. We refer to the fundamental papers of D'Ambrosio-Mitidieri \cite{DaMit} and Mitidieri-Pohozaev
\cite{MitPohoz359}, \cite{MitPohoz227}, \cite{MitPohozAbsence},
\cite{MitPohozMilan} for a comprehensive account of results
related to these (and also more general) problems on $\mathbb{R}^N$. 
Note that analogous results have also been obtained for degenerate
elliptic equations and inequalities (see, e.g., \cite{DaLu},
\cite{Mont}) and for the parabolic companion problems (see, e.g.,
\cite{MitPohozMilan}, \cite{PoTe}, \cite{P1}, \cite{PuTe}). 

By means of capacity tools, which make use of carefully chosen test
functions, Mitidieri-Pohozaev prove nonexistence of weak or
distributional solutions  of wide classes of differential
inequalities in $\mathbb{R}^N$. In particular, they show that equation
\eqref{EQ_lapl} on $\mathbb{R}^N$ does not admit any nontrivial
nonnegative solution, provided that
\begin{equation*}
\liminf_{R\rightarrow+\infty}R^{-\frac{2\sigma}{\sigma-1}}\int_{B_{\sqrt{2}R}\setminus
B_R}v(x)^{-\frac{1}{\sigma-1}}dx<\infty.
\end{equation*}

More recently, problem \eqref{EQ_lapl} has been investigated also in the Riemannian setting; in particular we cite the papers 
\cite{GrigKond}, 
\cite{GrigS}, \cite{Kurta}, \cite{MMP}, \cite{MMP2} \cite{Sun1}. Using a capacity argument which only exploits the
gradient of the distance function from a fixed reference point,
in particular the authors showed in \cite{GrigKond} that equation
\eqref{EQ_lapl} admits a unique nonnegative weak
solution provided that there exist positive constants $C$, $C_0$
such that for every $R>0$ sufficiently large and every small enough
$\e>0$,
\begin{equation}\label{Eq_noimp}
\int_{B_R(x_0)}v^{-\beta+\e}\,d\mu \leq C
R^{\alpha+C_0\e}(\log R)^k,
\end{equation}
where $d\mu$ is the canonical Riemannian measure on $M$, $B_R(x_0)$ is
the geodesic ball centered at a point $x_0\in M$ and
\begin{equation*}
\alpha = \frac{2\sigma}{\sigma-1}, \quad \beta = \frac{1}{\sigma-1}\,,
\end{equation*}
\[
0 \leq k <\beta.
\]
The sharpness of the
exponent $\alpha$ in this type of results is evident from the
Euclidean case \eqref{EQ_lapl} with $v\equiv 1$ and $N\geq 3$,
where $\alpha=N$ and the corresponding critical growth is $\sigma^*
=\frac{N}{N-2}$. The sharpness of the exponent $\beta$ is
definitely a more delicate
 question and has recently been settled on a general Riemannian
manifold, in case $v\equiv 1$, in \cite{GrigS}. In
that paper, using carefully chosen families of test functions, the
authors showed that equation \eqref{EQ_lapl} with $v\equiv 1$ does
not admit any nonnegative weak solution provided \eqref{Eq_noimp}
holds with $k=\beta$. 

Observe that on a general Riemannian manifold only the gradient of the distance function from a fixed point is used, and not its Laplacian, since the former is always well-defined in the weak sense, while the latter may not exist due to the possible presence of a cut-locus. Therefore, with this type of arguments, while on $\mathbb{R}^N$ one may consider very weak solutions, on a general Riemannian manifold one is forced to deal with weak solutions.

\smallskip

The aim of this paper is to study problem \eqref{EQ_lapl} on weighted graphs 
$(V,\omega,\mu)$, where $\omega$ and $\mu$ are the so called \textit{edge weight} and \textit{node measure}, respectively (see Section \ref{mb} for more details).
Recently, the study of elliptic and parabolic equations on graphs has attracted much attention (see, e.g., \cite{Grigbook}, \cite{GLY}, \cite{GLY2}, \cite{HM}, \cite{HKMW}, \cite{HKS}, \cite{KS}, \cite{KLW}, \cite{LW}, \cite{PS}). 

In particular, in \cite{BMP} and \cite{MP}  Liouville type theorems for equation \eqref{EQ_lapl} are obtained on weighted graphs 
$(V,\omega,\mu)$, when $\sigma=1$. 
In \cite{GHS}, for $\sigma>1$, under the assumption that
there exists $p_0>1$ such that 
\begin{equation}\label{e2p}
\frac{\omega(x,y)}{\mu(x)}\geq \frac 1{p_0}
\end{equation}
for all $x, y\in V$ which are endpoints of an edge of $V$,
it is shown that equation
\begin{equation}\label{e1p}
\Delta u + v(x) u^\sigma\leq 0 \quad \text{ in } V
\end{equation}
with $v\equiv 1$ does not admit any nontrivial nonnegative solutions, if, for some $x_0\in V, C>0$ 
\[\operatorname{Vol}(B_R(x_0)):=\sum_{x\in B_{R}(x_0)} \mu(x)\leq CR^{\frac{2\sigma}{\sigma-1}}\log R^{\frac 1{\sigma-1}}\quad \text{ for all }
     R\geq R_0\,,
\]
where $B_R(x_0)\subset V$ is the ball with centre $x_0\in V$ and radius $R>0$, defined in terms of the natural distance on $V$. The technique used in \cite{GHS} relies on a test function argument inspired from \cite{GrigS} and is based on a single integration by parts.

In this paper we investigate nonexistence of nonnegative nontrivial solutions to equation \eqref{e1p}. With respect to the results established in \cite{GHS}, we remove assumption \eqref{e2p}, we allow for the presence of a potential $v$, and we can consider a general pseudo-metric $d$ on the graph rather than using the natural distance on the graph. 
We assume an upper bound on the Laplacian of $d(\cdot, x_0)$, for a fixed reference point $x_0\in V$, together with a related growth condition on the weighted volume of annuli centered at $x_0$. 
Under such hypotheses we show that the only nonnegative solution $u$ to equation \eqref{e1p} is $u\equiv 0$, see Theorem \ref{t: mainthm}. Note that this kind of hypothesis on the Laplacian of the distance arises quite naturally when studying problems on Riemannian manifolds, and it follows from curvature conditions on the manifold (see also Remark \ref{curvatura}).

In our proofs we exploit a test functions argument with test functions depending on $d(\cdot, x_0)$. 
We integrate by parts twice, since on graphs no regularity issues arise for functions. 
Our results are sharp; we discuss this aspect with a few examples: in one we deal with the standard lattice $\mathbb Z^N$ and in another one we consider a homogeneous tree.
Let us mention that we construct one example for which the nonexistence results in \cite{GHS} cannot be used, since hypothesis \eqref{e2p} fails, while our main nonexistence theorem applies. 

\smallskip

The paper is organized as follows. In Section \ref{mb} we introduce basic notions on graphs. In Section \ref{mr} we state our main nonexistence theorem, which is proved in Section \ref{proofmt}. Finally, in Section \ref{ex} we provide some examples in which our main theorem applies, while in Section \ref{optimal} we show optimality of our result.

\section{Mathematlcal background}\label{mb}
\subsection{The graph setting}
Let $V$ be a countably infinite set and $\mu:V\to (0,+\infty)$ be a given function. 
Furthermore, let
\begin{equation*}
\omega:V\times V\to [0,+\infty)
\end{equation*}
be a symmetric function with zero diagonal and finite sum, i.e.
\begin{equation}\label{omega}
\begin{array}{ll}
\text{(i)}\,\, \displaystyle\omega_{xy}=\omega_{yx}&\text{for all}\,\,\, (x,y)\in V\times V;\\
\text{(ii)}\,\, \displaystyle\omega_{xx}=0 \quad\quad\quad\,\, &\text{for all}\,\,\, x\in V;\\
\text{(iii)}\,\, \displaystyle \sum_{y\in V} \omega_{xy}<\infty \quad &\text{for all}\,\,\, x\in V\,.
\end{array}
\end{equation}
Thus, we define  \textit{weighted graph} the triplet $(V,\omega,\mu)$, where $\omega$ and $\mu$ are the so called \textit{edge weight} and \textit{node measure}, respectively. Observe that assumption (ii) corresponds to ask that $V$ has no loops. 
\smallskip

\noindent Let $x,y$ be two points in $V$; we say that
\begin{itemize}
\item $x$ is {\it connected} to $y$ and we write $x\sim y$, whenever $\omega_{xy}>0$;
\item the couple $(x,y)$ is an {\it edge} of the graph and the vertices $x,y$ are called the {\it endpoints} of the edge whenever $x\sim y$; we will denote by $E$ the set of all edges of the graph;
\item a collection of vertices $ \{x_k\}_{k=0}^n\subset V$ is a {\it path} if $x_k\sim x_{k+1}$ for all $k=0, \ldots, n-1.$
\end{itemize}

A weighted graph $(V,\omega,\mu)$  is said to be 
\begin{itemize}
\item[(i)] {\em locally finite} if each vertex $x\in V$ has only finitely many $y\in V$ such that $x\sim y$;
\item[(ii)] {\em connected} if, for any two distinct vertices $x,y\in V$, there exists a path joining $x$ to $y$;
\item[(iii)] {\em undirected} if its edges do not have an orientation.
\end{itemize}
For any $x\in V$ we will call {\em degree of $x$} the cardinality of the set $$\{y\in V\colon y\sim x\}.$$ 

\noindent A {\it pseudo metric} on $V$ is a symmetric map, with zero diagonal, $d:V\times V\to [0, +\infty)$, which also satisfies the triangle inequality
\begin{equation*}
  d(x,y)\leq d(x,z)+d(z,y)\quad \text{for all}\,\,\, x,y,z\in V.
\end{equation*}
In general, $d$ is not a metric, since there may be points $x, y\in V$, $x\neq y$ such that $d(x,y)=0\,.$ 
Finally, we define the \textit{jump size} $j>0$ of a general pseudo metric $d$ as
\begin{equation}\label{e14f}
j:=\sup\{d(x,y) \,:\, x,y\in V, \omega(x,y)>0\}.
\end{equation}

For any given $\Omega\subset V$ its {\it volume} is defined as
\[\operatorname{Vol}(\Omega):=\sum_{x\in \Omega}\mu(x)\,.\]

\subsection{The weighted Laplacian} Let $\mathfrak F$ denote the set of all functions $f: V\to \mathbb R$\,. For any $f\in \mathfrak F$ and for all $x,y\in V$, let us give the following
\begin{definition}\label{def1}
Let $(V, \omega,\mu)$ be a weighted graph. For any $f\in \mathfrak F$,
\begin{itemize}
\item the {\em difference operator} is
\begin{equation}\label{e2f}
\nabla_{xy} f:= f(y)-f(x)\,;
\end{equation}
\item the {\em (weighted) Laplace operator} on $(V, \omega, \mu)$ is
\begin{equation*}
\Delta f(x):=\frac{1}{\mu(x)}\sum_{y\in V}\omega_{xy}[f(y)-f(x)]=\frac 1{\mu(x)}\sum_{y\sim x}\omega_{xy}[f(y)-f(x)]\quad \text{ for all }\, x\in V\,.
\end{equation*}
\end{itemize}
\end{definition}
Clearly,
\[\Delta f(x)=\frac 1{\mu(x)}\sum_{y\in V}\omega_{xy}(\dif f)\quad \text{ for all } x\in V\,.\]
We also define  the {\it gradient squared} of $f\in \mathfrak F$ (see \cite{CGZ})
\[|\nabla f(x)|^2=\frac 1{\mu(x)}\sum_{y\in V}\omega_{xy}(\dif f)^2, \quad x\in V\,;\]
It is straightforward to show, for any $f,g\in \mathfrak F$, the validity of
\begin{itemize}
\item  the {\it product rule}
\begin{equation*}
\nabla_{xy}(fg)=f(x) (\nabla_{xy} g) + (\nabla_{xy} f)g(y) \quad \text{ for all } x,y\in V\,;
\end{equation*}
\item the {\it integration by parts formula}
\begin{equation}\label{e4f}
\sum_{x\in V}[\Delta f(x)] g(x) \mu(x)=-\frac 1 2\sum_{x,y\in V}\omega_{xy}(\dif f)(\dif g)= \sum_{x\in V}f(x) [\Delta g(x)] \mu(x)\,,
\end{equation}
provided that at least one of the functions $f, g\in \mathfrak F$ has {\it finite} support.
\end{itemize}

For any $x_0\in V$ and $r>0$ we define the ball $B_r(x_0)$ with respect to any pseudo metric $d$ as
\[B_r(x_0):=\{x\in V\,:\, d(x,x_0)\leq r\}\,.\]

\subsection{Assumptions.} In this paper, we always make the following assumptions:
\begin{equation}\label{e7f}
\begin{aligned}
\text{(i)}\,\,\, & (V, \omega, \mu) \text{ is a connected, locally finite, undirected, weighted graph};\\
\text{(ii)}\,\, \, &  \text{ there exists a constant } C>0 \text{ such that for every } x\in V, \\ 
   & \sum_{y\sim x}\omega_{xy}\leq C \mu(x);\\
\text{(iii)} \,\,\,& \text{there exists a \textit{pseudo metric}}\,\, d \,\,\,\text{such that the jump size $j$ is finite}; \\
\text{(iv)}\,\,\,& \text{the ball}\,\,\, B_r(x) \,\,\,\text{with respect to}\,\,\, d\,\,\, \text{is a finite set, for any}\,\,\, x\in V,\,\,\, r>0;\\
\text{(v)}\,\, & \text{ for some } x_0\in V, R_0>1, \alpha\in [0, 1], C>0, \text{ there holds } \\
& \Delta d(x, x_0)\leq \frac C{d^\alpha(x, x_0)} \text{ for any }\, x\in V\setminus B_{R_0}(x_0).
\end{aligned}
\end{equation}

\begin{remark}
    We explicitly note that condition \eqref{e7f}-(v) is always satisfied with $\alpha=0$ if \eqref{e7f}-(ii) and \eqref{e7f}-(iii) hold. Indeed, for every choice $x_0\in V$ we have
    \begin{equation*}
\Delta d(x,x_0)=\frac{1}{\mu(x)}\sum_{y\sim x}\omega_{xy}[d(y,x_0)-d(x,x_0)]\leq \frac{1}{\mu(x)}\sum_{y\sim x}\omega_{xy} d(x,y)\leq  Cj, \quad \text{ for every } x\in V.
\end{equation*}
Moreover we also note that if there exist $C>0$, $x_0\in V$, $p>1$ and $R_0>1$ such that
$$\Delta d^p(x,x_0)\leq C$$
for every $x\in B_{R_0}^c(x_0)$, then \eqref{e7f}-(v) holds for $\alpha=\min\{p-1,1\}$. Indeed by the convexity of the real valued function $h(t)=t^p$ and $x\in B_{R_0}^c(x_0)$, we get
\begin{equation*}
\begin{split}
    C&\geq\Delta d^p(x,x_0)=\frac{1}{\mu(x)}\sum_{y\sim x}\omega_{xy} (d^p(y,x_0) - d^p(x,x_0))\\
    &\geq \frac{p}{\mu(x)}\sum_{y\sim x} \omega_{xy} d^{p-1}(x,x_0)(d(y,x_0) - d(x,x_0))\\
    &=pd^{p-1}(x,x_0)\Delta d(x,x_0).
\end{split}
\end{equation*}
\end{remark}

\begin{remark}\label{curvatura}
Observe that, on Riemannian manifolds, assumption \eqref{e7f}-(v) is satisfied if a suitable curvature condition holds. In fact, if the Ricci curvature fulfills the bound 
$$\operatorname{Ric}(x)\geq - C d^{-2\alpha}(x, x_0)\quad \text{ whenever }\; d(x, x_0)>1,$$
for some $C>0$, $\alpha\in [0, 1]$ and a fixed reference point $x_0$, then, by means of standard Laplacian comparison theorems, \eqref{e7f}-(v) is satisfied.
\end{remark}


\section{Main results}\label{mr}

\begin{theorem}\label{t: mainthm}
Let assumption \eqref{e7f} be satisfied. Let $v\colon V\to \R$ be a positive function, and $\sigma>1.$ Suppose that

\begin{equation}\label{i: volumegrowth} 
        \sum_{x\in B_{2R}(x_0)\setminus B_{R}(x_0)} v^{-\frac{1}{\sigma-1}}(x)\mu(x)\leq CR^{\frac{(1+\alpha)\sigma}{\sigma-1}}\quad \text{ for all }
     R\geq R_0\,,
\end{equation}
with $\alpha\in[0,1]$, $x_0\in V$ as in \eqref{e7f}-(v). Let $u:V\to \R$ be a non-negative solution of \eqref{e1p}.
Then $u\equiv 0$.
\end{theorem}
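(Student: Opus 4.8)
The plan is to run a test-function (capacity) argument of Mitidieri--Pohozaev / Grigor'yan--Sun type, exploiting the fact noted in the introduction that on a graph the \emph{full} Laplacian may be moved onto the test function (two integrations by parts), so that only the one-sided bound \eqref{e7f}-(v) on $\Delta d(\cdot,x_0)$ enters. Write $r(x):=d(x,x_0)$ and $\sigma':=\frac{\sigma}{\sigma-1}$, so that the exponent in \eqref{i: volumegrowth} is exactly $(1+\alpha)\sigma'$. For $R>R_0$ large I would pick an increasing family of cutoffs $\Phi=\Phi_R$ with $\Phi\equiv1$ on $[0,R]$, $\Phi\equiv0$ on $[2R,\infty)$, $0\le\Phi\le1$, $|\Phi'|\le c/R$, $|\Phi''|\le c/R^2$, and, crucially, chosen so that $\Phi^{\lambda}$ is \emph{convex} where $\Phi\le\tfrac12$ (e.g.\ $\Phi$ affine there) while $C^2$ near $\{\Phi=1\}$. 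Set $\psi:=\Phi(r(\cdot))^{\lambda}$ with $\lambda:=\siii$; by \eqref{e7f}-(iv) it has finite support and $\psi\equiv1$ on $B_R(x_0)$.

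First I would test \eqref{e1p}: multiplying $\Delta u\le -vu^\sigma$ by $\psi\mu\ge0$, summing over $V$, and integrating by parts twice through \eqref{e4f} (legitimate since $\psi$ is finitely supported) gives
\[
\sum_{x\in V} v\,u^\sigma\,\psi\,\mu\;\le\;\sum_{x\in V} u\,(-\Delta\psi)\,\mu\;\le\;\sum_{x\in V} u\,(-\Delta\psi)_+\,\mu .
\]
Passing to the positive part is the first key point: at any $x$ with $\Delta\psi(x)<0$ one necessarily has $\psi(x)>0$, so every division by $\psi$ below is harmless, and the terms where $\psi$ is tiny but $\Delta\psi>0$ (which the jump size creates near the outer boundary) are simply discarded.

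The heart of the matter is the pointwise estimate $\big((-\Delta\psi)_+\big)^{\sigma'}\psi^{-1/(\sigma-1)}\le C R^{-(1+\alpha)\sigma'}$ on the support of $(-\Delta\psi)_+$, which is contained in $B_{2R}\setminus B_{R-j}$. I would obtain it by splitting according to the size of $\psi$. Where $\psi$ is small (the outer region $\Phi\le\tfrac12$), convexity of $\Phi^\lambda$ gives the discrete Jensen inequality $\Delta\psi(x)\ge(\Phi^\lambda)'(r(x))\,\Delta r(x)$, whence, since $-(\Phi^\lambda)'\ge0$ and using \emph{only} the one-sided bound $\Delta r(x)\le C r(x)^{-\alpha}$ of \eqref{e7f}-(v),
\[
(-\Delta\psi(x))_+\;\le\;\big|(\Phi^\lambda)'(r(x))\big|\,(\Delta r(x))_+\;\le\;\frac{C\lambda}{R^{1+\alpha}}\,\Phi(r(x))^{\lambda-1};
\]
inserting this and $\psi^{1/(\sigma-1)}=\Phi^{\lambda/(\sigma-1)}$, the choice $\lambda=\siii$ makes the residual power of $\Phi$ nonnegative, giving the claim. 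Where instead $\psi\ge2^{-\lambda}$ is bounded below (the inner region, $\Phi\ge\tfrac12$), a crude second-order Taylor estimate — controlling the discrete second difference by the finite jump size \eqref{e7f}-(iii) together with $\sum_{y\sim x}\omega_{xy}\le C\mu(x)$ from \eqref{e7f}-(ii) — yields $(-\Delta\psi(x))_+\le CR^{-(1+\alpha)}+CR^{-2}\le CR^{-(1+\alpha)}$ (as $\alpha\le1$), and the claim again follows.

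With this estimate I would close in two steps. First, Young's inequality $ab\le\e a^\sigma+C_\e b^{\sigma'}$ applied to $u\,(-\Delta\psi)_+$ (with $a=(v\psi)^{1/\sigma}u$), after absorbing the $\e$-term and invoking the key estimate together with \eqref{i: volumegrowth} on the annulus $B_{2R}\setminus B_{R-j}$, makes all powers of $R$ cancel and gives $\sum_V vu^\sigma\psi\,\mu\le C$ uniformly in $R$; letting $R\to\infty$, monotone convergence yields $\sum_V vu^\sigma\mu<\infty$. Second, estimating $u\,(-\Delta\psi)_+$ instead by H\"older with exponents $\sigma,\sigma'$ and keeping the first factor over the annulus,
\[
\sum_{V} vu^\sigma\psi\,\mu\;\le\;\Big(\sum_{B_{2R}\setminus B_{R-j}}\!\! vu^\sigma\mu\Big)^{1/\sigma}\Big(\sum_{V}\frac{\big((-\Delta\psi)_+\big)^{\sigma'}}{\psi^{1/(\sigma-1)}}\,v^{-\frac{1}{\sigma-1}}\mu\Big)^{1/\sigma'},
\]
the second factor stays bounded while the first, an annular tail of the now-convergent series $\sum_V vu^\sigma\mu$, tends to $0$; the left-hand side converges to $\sum_V vu^\sigma\mu$, which must therefore vanish, and $v>0$ forces $u\equiv0$. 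The hard part will be exactly the estimate on $(-\Delta\psi)_+$ where $\psi$ is small: the positive jump size lets a boundary vertex "see" much larger values of $\psi$, so the naive bound on $|\Delta\psi|$ fails, and one must simultaneously pass to the positive part (to use the merely one-sided control on $\Delta r$ and discard the bad terms) and arrange convexity of the profile there (to render the discrete second difference nonnegative and drop it); the two-step passage from integrability to triviality is the other delicate point, forced by the criticality of the exponent in \eqref{i: volumegrowth}.
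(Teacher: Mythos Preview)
Your proposal is correct and shares the paper's overall architecture (test with a power of a radial cutoff, move the Laplacian onto the test function via \eqref{e4f}, use only the one-sided bound \eqref{e7f}-(v), then Young for integrability and H\"older for the vanishing step). The genuine difference is in how the key Laplacian estimate is organized.

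You estimate $(-\Delta\psi)_+$ for $\psi=\Phi(r)^\lambda$ directly, which forces the inner/outer splitting: where $\psi$ is bounded below a crude Taylor bound suffices, while near the outer boundary you must \emph{engineer} $\Phi^\lambda$ to be convex so that discrete Jensen kills the second-order remainder. This works, but the construction is delicate (convexity must hold on the full $j$-neighbourhood of each outer point, and the $C^2$ matching at the junction needs care). The paper avoids the splitting entirely by applying convexity not to the radial profile but to the map $t\mapsto t^s$ at the \emph{gradient} level: from $\nabla_{xy}\varphi^s\ge s\,\varphi^{s-1}(x)\,\nabla_{xy}\varphi$ one obtains
\[
\sum_V v\,\varphi^s u^\sigma\mu \;\le\; -s\sum_V u\,\varphi^{s-1}\,\Delta\varphi\,\mu,
\]
so only $-\Delta\varphi$ (not $-\Delta\varphi^s$) needs bounding, and a single Taylor expansion of the cutoff $\psi$, using $\psi'\le0$ together with \eqref{e7f}-(v) and \eqref{e7f}-(ii),(iii), gives $-\Delta\varphi\le CR^{-(1+\alpha)}\bone_{A_R}$ with no region-splitting and no special profile. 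The factor $\varphi^{s-1}$ that you extract from the Laplacian estimate appears here automatically from the convexity step. Your route makes the capacity-type quantity $((-\Delta\psi)_+)^{\sigma'}\psi^{-1/(\sigma-1)}$ explicit; the paper's route buys a cleaner decoupling of the convexity argument from the Taylor estimate, at the cost of one extra line in the integration by parts.
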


As a simple consequence of Theorem \ref{t: mainthm}, when $v\equiv 1$ on $V$ we obtain the following result.
\begin{corollary} Let assumption \eqref{e7f} be satsfied, let $\sigma>1$. Suppose that 
\begin{equation}        
\Vol(B_{2R}(x_0)\setminus B_R(x_0))\leq CR^{\frac{(1+\alpha)\sigma}{\sigma-1}}\quad \text{ for every } R\geq R_0\,,
\end{equation}
with $\alpha\in[0,1]$, $x_0\in V$ as in \eqref{e7f}-(v). Let $u:V\to \R$ be a non-negative solution of $$\Delta u +  u^\sigma\leq 0 \quad \text{ in } \; V\,.$$
Then $u\equiv 0$.
\end{corollary}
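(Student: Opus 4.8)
The plan is to obtain the Corollary as the special case $v\equiv 1$ of Theorem \ref{t: mainthm}, so that no new argument is required beyond checking that the theorem's hypotheses hold in this situation. First I would observe that $v\equiv 1$ is indeed a positive function on $V$, that the structural assumptions \eqref{e7f} and the condition $\sigma>1$ are common to both statements, and that the inequality $\Delta u+u^\sigma\le 0$ is precisely \eqref{e1p} with this choice of $v$. The only point that needs verification is therefore the weighted growth hypothesis \eqref{i: volumegrowth}.

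For this I would note that, when $v\equiv 1$, we have $v^{-\frac{1}{\sigma-1}}(x)=1$ for every $x\in V$, whence
\begin{equation*}
\sum_{x\in B_{2R}(x_0)\setminus B_R(x_0)} v^{-\frac{1}{\sigma-1}}(x)\mu(x)=\sum_{x\in B_{2R}(x_0)\setminus B_R(x_0)}\mu(x)=\Vol\bigl(B_{2R}(x_0)\setminus B_R(x_0)\bigr).
\end{equation*}
By the assumed volume bound this quantity is at most $CR^{\frac{(1+\alpha)\sigma}{\sigma-1}}$ for all $R\ge R_0$, which is exactly \eqref{i: volumegrowth}. Thus all the hypotheses of Theorem \ref{t: mainthm} are satisfied, and applying it yields $u\equiv 0$, proving the Corollary.

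Since the Corollary reduces to the theorem so transparently, the genuine difficulty lies entirely in Theorem \ref{t: mainthm} itself, whose proof I would sketch as follows. Assuming $u\ge 0$ solves \eqref{e1p}, one has $\Delta u\le -vu^\sigma\le 0$, so $u$ is superharmonic. The strategy is a capacity/test-function argument: fix a radial cutoff $\p=\eta(d(\cdot,x_0))$ equal to $1$ on $B_R(x_0)$ and vanishing outside $B_{2R}(x_0)$ (a finite set by \eqref{e7f}-(iv), so that \eqref{e4f} is legitimate), test the inequality against $\p$, sum against $\mu$, and use the integration by parts formula \eqref{e4f} twice so as to transfer the discrete Laplacian onto the cutoff. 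This produces terms governed by $\Delta d(\cdot,x_0)$, and assumption \eqref{e7f}-(v) is exactly what is needed to bound them. Applying Young's inequality with conjugate exponents $\sigma$ and $\frac{\sigma}{\sigma-1}$ then lets one absorb the good term $\sum_{x\in V} v u^\sigma\p\,\mu$, leaving a remainder supported on the annulus $B_{2R}(x_0)\setminus B_R(x_0)$ and weighted by the conjugate factor $v^{-\frac{1}{\sigma-1}}$, which is precisely the left-hand side of \eqref{i: volumegrowth}.

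The main obstacle I anticipate is this second step, namely controlling the discrete second-order error terms produced by the cutoff. On a graph the chain rule for $\Delta(\eta\circ d)$ holds only up to mean-value remainders, and, the Laplacian being nonlocal, one must exploit the finiteness of the jump size $j$ from \eqref{e14f} together with the monotonicity and convexity of the profile $\eta$ to bound these remainders uniformly. Matching the resulting power of $R$ against the exponent $\frac{(1+\alpha)\sigma}{\sigma-1}$ and then letting $R\to\infty$ to force $\sum_{x\in V} v u^\sigma\mu=0$, hence $u\equiv 0$ since $v>0$, is the delicate part. For the Corollary itself, however, all of this is already packaged inside Theorem \ref{t: mainthm}, so no further obstacle remains.
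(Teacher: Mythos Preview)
Your reduction is exactly what the paper does: the Corollary is stated immediately after Theorem \ref{t: mainthm} with the remark that it is the case $v\equiv 1$, and no separate proof is given. Your verification that the volume condition becomes \eqref{i: volumegrowth} when $v\equiv 1$ is the only thing to check, and the sketch you add of the theorem's proof is accurate and matches the paper's argument (cutoff $\psi(d/R)$, Taylor expansion to control $-\Delta\varphi$ via \eqref{e7f}-(v) and the jump size $j$, Young/H\"older with exponents $\sigma,\frac{\sigma}{\sigma-1}$, then letting $R\to\infty$).
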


\begin{remark}
\noindent (i) In Example \ref{ex1} we apply Theorem \ref{t: mainthm} on the \textit{N-dimensional integer lattice graph}  $(\mathbb Z^N, \omega, \mu)$, with a suitable $\mu$, for $v\equiv1$ and $$1<\sigma\leq\frac{N}{N-2}\text{ if }N>2,\,\,\,\,\,\sigma>1\text{ if }N=1,2.$$ Furthermore in Example \ref{ex2} we apply Theorem \ref{t: mainthm} on a tree where condition \eqref{e2p} fails and $v\equiv1$; therefore, the nonexistence result in \cite{GHS} cannot be applied. 

\noindent (ii) Observe that condition \eqref{i: volumegrowth} is optimal. In fact, in view of Theorem \ref{teooptimal} below there exists a positive solution $u$ to \eqref{e1p} with $v\equiv 1$ on $(\mathbb{Z}^N, \omega, \mu)$, for the same $\mu$ appearing in Example \ref{ex1}  and with $$N>2 \text{ and }\sigma>\frac{N}{N-2},$$ where condition \eqref{e7f} holds while \eqref{i: volumegrowth} fails. 

In particular, the critical exponent for \eqref{e1p} with $v\equiv1$ on $(\mathbb{Z}^N, \omega, \mu)$ for $N>3$ is $\sigma^*=\frac{N}{N-2}$, which is the same critical exponent as that of the problem $$\begin{cases}\Delta u+u^\sigma\leq0&\quad\text{ on }\mathbb{R}^N,\\u\geq0&\quad\text{ on }\mathbb{R}^N.\end{cases}$$

\noindent (iii) Also Theorem \ref{teo2optilmal} is about optimality of condition \eqref{i: volumegrowth}. Indeed, it guarantees the existence of a positive solution $u$ to \eqref{e1p} with $v\equiv 1$ on a {\it homogeneous tree of degree $N$} \, $$(\mathbb {T}_N, E, \mu),$$ with a suitable $\mu$, where condition \eqref{e7f} holds and \eqref{i: volumegrowth} is not fulfilled.
\end{remark}

\section{Proof of Theorem \ref{t: mainthm}}\label{proofmt}
We begin with a strong maximum principle for nonnegative superharmonic functions for the weighted Laplace operator on $(V,\omega,\mu)$.
\begin{lemma}
Let $u\colon V \to \R$ be a function fulfilling $$u\geq 0 \quad \text{ in } V,$$ and 
$$\Delta u\leq 0\quad \text{ in } V\,.$$
Then either $$u>0\quad \text{ in } V,$$ or 
$$u\equiv 0 \quad \text{ in } V.$$ 
\end{lemma}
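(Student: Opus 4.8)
The plan is to prove the strong maximum principle for nonnegative superharmonic functions by a standard connectedness argument. Since $\Delta u \leq 0$ means that at each vertex the value of $u$ is at least its $\omega$-weighted average of neighboring values, the strategy is to show that the zero set of $u$ is both open and closed in the discrete topology of the connected graph, so that it is either empty or all of $V$. The key structural fact I would exploit is that for a locally finite, connected graph, ``propagation along edges'' lets a single zero at a vertex with $\Delta u(x)\leq 0$ force all its neighbors to be zero.

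First I would suppose that $u$ is not strictly positive, i.e. there exists some $x\in V$ with $u(x)=0$, and consider the set $Z:=\{x\in V : u(x)=0\}$, which is nonempty by assumption. The central step is to show that $Z$ is ``closed under taking neighbors'': if $x\in Z$, then every $y\sim x$ also lies in $Z$. To see this, evaluate the superharmonicity inequality at $x$, namely
\begin{equation*}
0\geq \Delta u(x)=\frac{1}{\mu(x)}\sum_{y\sim x}\omega_{xy}\,[u(y)-u(x)]=\frac{1}{\mu(x)}\sum_{y\sim x}\omega_{xy}\,u(y),
\end{equation*}
where I used $u(x)=0$. Since $\mu(x)>0$, each $\omega_{xy}\geq 0$, and each $u(y)\geq 0$, every summand is nonnegative, so the only way the sum can be $\leq 0$ is for every term to vanish. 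For $y\sim x$ we have $\omega_{xy}>0$ by definition of $\sim$, hence $u(y)=0$, i.e. $y\in Z$.

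From this neighbor-propagation property I would conclude by connectedness. Take any $x\in Z$ and any $w\in V$; since the graph is connected, there is a path $x=x_0\sim x_1\sim\cdots\sim x_n=w$. Applying the propagation step inductively along the path --- $x_0\in Z$ forces $x_1\in Z$, which in turn forces $x_2\in Z$, and so on --- gives $w\in Z$. As $w$ was arbitrary, $Z=V$, that is $u\equiv 0$. Thus either $Z=\emptyset$, meaning $u>0$ everywhere, or $Z=V$, meaning $u\equiv 0$, which is exactly the dichotomy claimed. I do not expect any serious obstacle here: the only point requiring care is the nonnegativity of every term in the sum, which hinges on having $u\geq 0$ globally and on $\omega_{xy}>0$ precisely for neighbors, so that no vanishing weight can spoil the argument; local finiteness guarantees the sum is finite and the manipulation is legitimate.
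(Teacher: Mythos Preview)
Your proof is correct and follows essentially the same approach as the paper: both evaluate $\Delta u(x)\leq 0$ at a zero of $u$ to force all neighbors to vanish, then invoke connectedness to propagate the zero to all of $V$. The paper is merely terser, omitting the explicit path argument that you spell out.
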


\begin{proof}
Suppose that $u(x)=0$ for some $x\in V$. We have
\begin{equation*}
 0\geq \Delta u(x)= \frac{1}{\mu(x)}\sum_{y\sim x} \omega_{x y} (u(y)-u(x))= \frac{1}{\mu(x)}\sum_{y \sim x} \omega_{x y} u(y)\geq 0.
\end{equation*}
Therefore $u(y)=0$ for every $y\in V$ such that $y\sim x$. Since the graph is connected, we get $u\equiv 0$.
\end{proof}

\begin{proof}[Proof of Theorem \ref{t: mainthm}]
    Let $\psi \in C^{2}([0,+\infty))$ be any cut-off function on $[0,+\infty)$ which satisfies the following conditions
    \begin{itemize}
        \item $\psi \equiv 0$ on $[2,+\infty)$ and $\psi\equiv 1$ on $[0,1]$.
        \item $\psi'(x)\leq 0$, for each $x\in [0,+\infty)$. \item There exists $C>0$ such that $|\psi'(x)|\leq C$ and $|\psi''(x)|\leq C$, for each $x\in [0,+\infty)$. 
    \end{itemize}
Fix $R\geq \max\{R_0,j\}$. We define $$\p: V\to \R$$ by $$\p(x)\coloneqq \psi\left(\frac{d(x)}{R}\right),$$ where $d(x)\coloneqq d(x,x_0)$. Similarly, for $r>0$, we define $B_r\coloneqq B_r(x_0)$.

\smallskip

We claim that there exists a constant $C>0$ such that, for each $x\in V$ the following estimate holds
\begin{equation}\label{eq: laplacephiestimate}
    -\Delta \varphi(x)\leq \frac{C}{R^{1+\alpha}}\bone_{B_{2R+j}\setminus B_{R-j}}.
\end{equation}
Indeed, observing that for each $t,r\geq 0$ the formula
\begin{equation*}
    \psi(t)=\psi(r) + \psi'(r)(t-r) + \frac{\psi''(\xi)}{2} (t-r)^2
\end{equation*}
holds for some $\xi\geq 0$ between $r$ and $t$, we get
\begin{equation*}
\begin{split}
    -\Delta \p(x)&= -\frac{1}{\mu(x)}\sum_{y\sim x}\omega_{xy}(\p(y)-\p(x))\\
    &= -\frac{1}{\mu(x)} \sum_{y\sim x} \omega_{xy} \left(\psi\left(\frac{d(y)}{R}\right) - \psi\left(\frac{d(x)}{R}\right)\right)\\
    &= -\frac{1}{\mu(x)} \sum_{y\sim x}\omega_{xy} \left(\psi'\left(\frac{d(x)}{R}\right)\left(\frac{d(y)-d(x)}{R}\right)+ \frac{\psi''(\xi)}{2}\left(\frac{d(y)-d(x)}{R}\right)^2\right),
        \end{split}
    \end{equation*}
    for some $\xi>0$ between $\frac{d(x)}{R}$ and $\frac{d(y)}{R}$. Therefore
    \begin{equation*}
\begin{split}
     -\Delta \p(x)&= -\psi'\left(\frac{d(x)}{R}\right)\left(\frac{1}{\mu(x)} \sum_{y\sim x}\omega_{xy} \frac{d(y)-d(x)}{R} \right) - \frac{1}{\mu(x)} \sum_{y\sim x}\omega_{xy} \frac{\psi''(\xi)}{2} \frac{(d(y)-d(x))^2}{R^2}\\
    &= -\psi'\left(\frac{d(x)}{R}\right) \frac{\Delta d(x)}{R} -\frac{1}{\mu(x)} \sum_{y\sim x}\omega_{xy} \frac{\psi''(\xi)}{2} \frac{(d(y)-d(x))^2}{R^2}\\
    &\leq  \left|\psi'\left(\frac{d(x)}{R}\right)\right| \frac{\Delta d(x)}{R} + \frac{1}{\mu(x)} \sum_{y\sim x}\omega_{xy} \left|\frac{\psi''(\xi)}{2}\right| \frac{(d(y)-d(x))^2}{R^2}\\
    &\leq C \left(\frac{1}{R^{1+\alpha}}\bone_{B_{2R}\setminus B_{R}}+\frac{1}{R^2}\bone_{B_{2R+j}\setminus B_{R-j}}\right),
\end{split}    
\end{equation*}
where in the last inequality we have used \eqref{e7f}-(v), and the following facts: $$|\psi'|\leq C,|\psi''|\leq C,$$ 
$$\psi'\equiv 0, \; \psi''\equiv 0 \; \text{ outside } [1,2],$$
$$|d(y)-d(x)|\leq d(x,y)\leq j, \text{ for every } x,y\in V \text{ such that } x\sim y.$$ Finally, observing that $\alpha\in[0,1]$ and $R>1$, we get
\begin{equation*}
    C\left(\frac{1}{R^{1+\alpha}}\bone_{B_{2R}\setminus B_{R}}+\frac{1}{R^2}\bone_{B_{2R+j}\setminus B_{R-j}}\right)\leq \frac{C}{R^{\alpha+1}} \bone_{B_{2R+j}\setminus B_{R-j}}
\end{equation*}
and the claim is proved. For simplicity, from now on we will denote the subset $B_{2R+j}\setminus B_{R-j}\subset V$ by $A_R$.

Now we are going to show that there exists a constant $C$ such that
\begin{equation}\label{eq: integrability}
    \sum_{x\in V}\mu(x)u^{\sigma}(x)v(x) \leq C.
\end{equation}
Since $u$ fulfills \eqref{e1p}, we have for each $x\in V$
\begin{equation*}
    \mu(x) v(x)u^{\sigma}(x)\leq -\mu(x) \Delta u(x). 
\end{equation*}
Let $s>\frac{\sigma}{\sigma - 1}$. Multiplying both terms by $\p^s(x)$ and summing over all $x\in V$, we get
\begin{equation*}
\begin{split}
    \sum_{x\in V}\mu(x)v(x)\p^s(x)u^\sigma(x) & \leq -\sum_{x\in V} \p^s(x) \mu(x)\Delta u(x)
    =-\sum_{x\in V}\sum_{y\sim x}\p^s(x) \omega_{xy}\nabla_{xy}u\\
    &=-\sum_{x\in V}\sum_{y\sim x}\omega_{xy} u(x)\nabla_{xy}\p^s
    \leq -s\sum_{x\in V} \sum_{y\sim x} \omega_{xy} u(x)\p^{s-1}(x)\nabla_{xy}\p,
\end{split}
\end{equation*}
where in the last inequality we have used the convexity of the function $h(t)=t^s$, i.e. $$h(t_1)\geq h(t_0) + h'(t_0)(t_1 - t_0)\;\, \text{ for every } t_0,t_1 \in \R^+.$$ Indeed,
\begin{equation*}
\nabla_{xy}\p^s=\p^s(y)-\p^s(x)\geq s\p^{s-1}(x)(\p(y)-\p(x))=s\p^{s-1}(x)\nabla_{xy}\p.
\end{equation*}
Combining the definition of the graph Laplacian with \eqref{eq: laplacephiestimate} we get
\begin{equation}\label{145}
\begin{split}
   \sum_{x\in V}\mu(x)v(x)\p^s(x)u^\sigma(x) & \leq -s\sum_{x\in V} \sum_{y\sim x} \omega_{xy} u(x)\p^{s-1}(x)\nabla_{xy}\p\\
   &=-s\sum_{x\in V} u(x)\p^{s-1}(x)\sum_{y\sim x} \omega_{xy}\nabla_{xy}\p\\
   &=-s \sum_{x\in V} u(x)\p^{s-1}(x)\mu(x)\Delta\p\\
   &\leq \frac{C}{R^{\alpha+1}}\sum_{x\in A_R}\mu(x)u(x)\p^{s-1}(x).
\end{split}
\end{equation}
By a standard application of Young's inequality the last term of the above inequalities is less or equal than
\begin{equation*}
    \frac{1}{\sigma}\sum_{x\in A_R}\mu(x)u^{\sigma}(x)\p^{s}(x)v(x)+\frac{\sigma-1}{\sigma}\frac{C}{R^{(1+\alpha)\frac{\sigma}{\sigma-1}}}\sum_{x\in A_R}\mu(x)\p^{s-\frac{\sigma}{\sigma -1}}(x)v^{-\frac{1}{\sigma -1}}(x).
\end{equation*}
Hence, by taking the first and the last term of the above inequalities and observing that $$0\leq\p^{s-\frac{\sigma}{\sigma -1}}(x)\leq 1,$$ we obtain
\begin{equation*}
    \sum_{x\in V}\mu(x)u^{\sigma}(x)v(x)\p^s(x)\leq \frac{C}{R^{(1+\alpha)\frac{\sigma}{\sigma-1}}}\sum_{x\in A_R}\mu(x)v^{-\frac{1}{\sigma -1}}(x).
\end{equation*}
Finally, observing that 
$$ \sum_{x\in B_{R}}\mu(x)u^{\sigma}(x)v(x)\leq\sum_{x\in V}\mu(x)u^{\sigma}(x)v(x)\p^s(x)$$ 
and that, for $R$ large enough, $A_R\subset B_{4R}\setminus B_{R/2}$,
from \eqref{i: volumegrowth} we get
\begin{equation*}
\begin{split}
    \sum_{x\in B_{R}}\mu(x)u^{\sigma}(x)v(x)&\leq \frac{C}{R^{(1+\alpha)\frac{\sigma}{\sigma-1}}}\sum_{x\in A_R}\mu(x)v(x)^{-\frac{1}{\sigma -1}}\\
    &\leq \frac{C}{R^{(1+\alpha)\frac{\sigma}{\sigma-1}}}\bigg[\sum_{x\in B_{4R}\setminus B_{2R}} \mu(x)v(x)^{-\frac{1}{\sigma -1}}+\sum_{x\in B_{2R}\setminus B_{R}} \mu(x)v(x)^{-\frac{1}{\sigma -1}}\\
    &\hspace{3cm}+\sum_{x\in B_R\setminus B_{R/2}} \mu(x)v(x)^{-\frac{1}{\sigma -1}}\bigg]\leq C.
\end{split}
\end{equation*}
Since this last estimate is independent of $R$, we obtain \eqref{eq: integrability}. We are now in the position of proving that $u\equiv 0$. By \eqref{145} we have
\begin{equation*}
     \sum_{x\in V}\mu(x)v(x)\p^s(x)u^\sigma(x)\leq \frac{C}{R^{\alpha+1}}\sum_{x\in A_R}\mu(x)u(x)\p^{s-1}(x).
\end{equation*}
By applying the H\"older inequality to the right hand side, then by applying \eqref{i: volumegrowth} we obtain
\begin{equation*}
\begin{split}
\sum_{x\in V}\mu(x)v(x)\p^s(x)u^\sigma(x)&\leq \frac{C}{R^{1+\alpha}}\left(\sum_{x\in A_R}\mu(x)u^{\sigma}(x)\p^{s}(x)v(x)\right)^{\frac{1}{\sigma}}\left(\sum_{x\in A_R}\mu(x)\p^{s-\frac{\sigma}{\sigma -1}}(x)v^{-\frac{1}{\sigma -1}}(x)\right)^{\frac{\sigma-1}{\sigma}}\\
&\leq \frac{C}{R^{1+\alpha}} \left(\sum_{x\in A_R}\mu(x)u^{\sigma}(x)v(x)\right)^{\frac{1}{\sigma}} \left(\sum_{x\in A_R}\mu(x)v^{-\frac{1}{\sigma -1}}(x)\right)^{\frac{\sigma-1}{\sigma}}\\
&\leq C\left(\sum_{x\in A_R}\mu(x)u^{\sigma}(x)v(x)\right)^{\frac{1}{\sigma}}.
\end{split}
\end{equation*}
By \eqref{eq: integrability}, the right hand side goes to zero as $R$ goes to infinity. It follows that
\begin{equation*}
    \sum_{x\in V}\mu(x)v(x)u^\sigma(x)\leq 0.
\end{equation*}
Since $\mu$ and $v$ are positive, we get $u\equiv 0$ in $V$.
\end{proof}


\section{Examples}\label{ex}
\begin{example}\label{ex1}
The \textit{N-dimensional integer lattice graph} is the graph consisting of the set of vertices $\mathbb{Z}^N$, the edge weight defined by
\[\omega:\mathbb Z^N\times \mathbb Z^N\to [0,+\infty),\]
\begin{equation*}
    \omega_{xy}= \begin{cases}
1, \,\, \text{ if } \sum_{k=1}^N|y_k-x_k|=1,\\
0, \,\, \text{ otherwise.} 
\end{cases}
\end{equation*}
and the node measure $$\mu(x)\coloneqq \sum_{y\sim x}\omega_{xy}=2N.$$

In other words,  the set of edges is
\begin{equation*}
    E=\left\{(x,y)\colon x,y \in \mathbb{Z}^N,\,\, \sum_{i=1}^{N}|x_i-y_i|=1\right\};
\end{equation*}
moreover, $y\sim x$ if and only if there exists $k\in\{1,\dots, N\}$ such that $x_k=y_k\pm 1$ and $x_i=y_i$ for $i\neq k$. Therefore, 
\begin{equation*}
    \omega_{xy}= \begin{cases}
1, \,\, \text{ if } y\sim x,\\
0, \,\, \text{ if } y\not\sim x.
\end{cases}
\end{equation*}

  Then for every function $u:\mathbb{Z}^N\rightarrow\mathbb{R}$ we have $$\Delta u(x)=\frac{1}{2N}\sum_{y\sim x}(u(y)-u(x))\qquad \text{ for all }x\in\mathbb{Z}^N.$$
We see that the only nonnegative solution to 
$$\Delta u + u^{\sigma}\leq 0\quad \text{ in }\; \mathbb{Z}^N\,,$$ 
is $u\equiv 0$, provided that 
\begin{itemize}
        \item  $N\in\{1,2\}$ and $\sigma>1$, or
        \item $N> 2$ and $1<\sigma \leq \frac{N}{N-2}$.
    \end{itemize}
    
In fact, we consider the graph $(\mathbb{Z}^N,E, \mu)$ endowed with the euclidean distance 
\begin{equation*}
|x-y|\coloneqq d(x,y)=\left(\sum_{i=1}^N(x_i-y_i)^2\right)^{1/2}.
\end{equation*}
    For $R>1$, we have
    \begin{equation*}
        \sum_{x\in B_R(0)}\mu(x)= 2N\sum_{x\in B_{R}(0)}1\leq C R^{N}\leq C R^{\siii},
    \end{equation*}
where in the last inequality we have used that $1<\sigma \leq\frac{N}{N-2}$, which implies $N\leq \siii$, in the case $N>2$, while $\siii>2\geq N$ if $N\in\{1 , 2 \}$. Condition \eqref{i: volumegrowth} of Theorem \ref{t: mainthm}, with $v\equiv1$ on $\mathbb{Z}^N$, is satisfied for $\alpha=1$. It remains to show that condition \eqref{e7f}-(v) of Theorem \ref{t: mainthm} holds as well. For $x\sim y$ we have $x=(x_1,\dots, x_N)$ and $y=(x_1,\dots, x_k\pm 1,\dots x_N)$ for some $k\in \{1,\dots, N\}$. In this case we obtain
\begin{equation*}
    d^2(y,0)-d^2(x,0)=(|x|^2\pm 2x_k + 1) - |x|^2 = \pm 2x_k +1,
\end{equation*}
which implies that
\begin{equation*}
\begin{split}
    \Delta d^2(x,0) &= \sum_{y\sim x}\frac{\omega_{xy}}{\mu(x)}(d^2(y,0)-d^2(x,0))\\
    &= \frac{1}{2N} \sum_{k=1}^N(2x_k+1) + (-2 x_k+1)=1.
\end{split}
\end{equation*}
By convexity of the real valued function $h(t)=t^2$, we get
\begin{equation*}
\begin{split}
    1&=\Delta d^2(x,0)=\frac{1}{2n}\sum_{y\sim x} (d^2(y,0) - d^2(x,0))\\
    &\geq \frac{1}{2n}\sum_{y\sim x} 2 d(x,0)(d(y,0) - d(x,0))\\
    &=2d(x,0)\Delta d(x,0).
\end{split}
\end{equation*}
From the previous inequality we get
\begin{equation*}
    \Delta d(x,0) \leq \frac{1}{2 d(x,0)}.
\end{equation*}
Finally, applying Theorem \ref{t: mainthm} we obtain the assertion.
\end{example}
\medskip

The next example of this section shows that Theorem \ref{t: mainthm} can be applied to weighted graphs which lack property \eqref{e2p}.

We recall that a graph $(\mathbb{T}, E, \mu)$ is a tree if for each $x,y\in \mathbb{T}$ there exists only one (minimal) path that connects $x$ to $y$. Here with an abuse of notation we declare the set of edges $E$ of the graph, i.e. we declare which $x,y\in \mathbb{T}$ satisfy $y\sim x$, instead of a weight function $\omega$; we tacitly assume that we can consider a suitable weight function $\omega$ on the graph, which induces the set of edges $E$.

We consider on $\mathbb{T}$ the distance between two vertices $x,y\in \mathbb{T}$, denoted by $d(x,y)$, as the cardinality of the set of the edges of the unique (minimal) path between them. Note that the distance $d$ has jump size $j=1$. For $n\geq 0$, we set $$D_n\coloneqq \{x\in \mathbb{T}\colon d(0,x)=n\},$$ where $0$ is the root of the tree $\mathbb{T}$, while $E_n$ denotes the collection of all edges from vertices in $D_n$ to vertices in $D_{n+1}$. Note that if $x\in D_n$ for some $n\geq1$, then there exists a unique $y\in D_{n-1}$ such that $y\sim x$.
A tree is said \textit{homogeneous of degree $N>1$} if each element has degree $N$, and it will be denoted by $(\mathbb{T}_N, E, \mu)$.


\begin{example}\label{ex2}
Let $(\mathbb{T},E, \mu)$ be the tree with root $0$ such that each element of $D_n$ has $n+1$ edges. Note that the cardinality of $D_n$ is equal to $(n-1)!$ for every $n\geq 1$. 
For $x,y \in \mathbb{T}$ we define $$\omega_{xy}=\frac{1}{\min\{h!,k!\}}, \text{ if } y\sim x \text{ and } x\in D_h \text{ and } y\in D_k,$$ (note that if $y\sim x$, then $|h-k|=1$), whereas $$\omega_{xy}=0 \text{ if } x\not\sim y.$$ 
Hence, if $x\in D_n$ and $n\geq 1$, then there are $y_1,\dots ,y_n \in D_{n+1}$ such that $y_k\sim x$ for $k=1,\dots, n$ and only one $y_0 \in D_{n-1}$ such that $y_0\sim x$. Therefore we have $$\omega_{xy_k}=\frac{1}{n!} \text{ for } k\in \{1,\dots, n\},$$ and $$\omega_{xy_0}=\frac{1}{(n-1)!}.$$ We define the map $\mu\colon \mathbb{T}\to [0,+\infty)$ by
\begin{equation*}
    \mu(x)\coloneqq\sum_{y\sim x}\omega_{xy}= \frac{n}{n!} + \frac{1}{(n-1)!}=\frac{2}{(n-1)!} \quad \text{for } x\in D_n, \, n\geq 1.
\end{equation*}
In particular condition \eqref{e2p} is not satisfied. Indeed, for any $x\in D_n$ with $n\geq1$ we have $$\frac{\omega_{xy_k}}{\mu(x)}=\frac{1}{2n} \quad \text{ for } \; k\in\{1,\dots,n\},$$ and $$\frac{\omega_{xy_0}}{\mu(x)}=\frac 12.$$ Finally, if $x=0\in D_0$, then there exists a unique $y\in \mathbb{T}$ such that $y\sim x$,  thus we have $\omega_{0y}=1$ and $\mu(0)=1$.

It remains to show that conditions \eqref{i: volumegrowth} and \eqref{e7f}-(v) of Theorem \ref{t: mainthm} are satisfied. In order to do that let $x\in D_n$, with $n\geq 1$
\begin{equation*}
\begin{split}
    \Delta d(x,0)&=\sum_{y\sim x}\frac{\omega_{xy}}{\mu(x)} [d(y,0) - d(x,0)]\\
    &=\sum_{k=1}^n\frac{1}{2n}[d(y_k,0) - d(x,0)] + \frac{1}{2} [d(y_0,0) - d(x,0)]\\
    &= \frac 12 -\frac 12=0.
\end{split}
\end{equation*}
Therefore, in particular it holds that
$$\Delta d(x,0)\leq \frac{C}{d(x,0)},$$
and condition \eqref{e7f}-(v) of Theorem \ref{t: mainthm} is satisfied for $\alpha=1$. We conclude by observing that condition \eqref{i: volumegrowth} is satisfied for the same value of $\alpha$. Indeed, let $N\in \N$ and $N\geq 1$, we have \begin{equation*}
    \begin{split}
\operatorname{Vol}(B_N)&= \sum_{x\in B_N}\mu(x)=\sum_{k=0}^{N}\sum_{x\in D_k}\mu(x)\\
&=1+ \sum_{k=1}^{N}\frac{2}{(k-1)!} (k-1)!\\
&= 2N +1.
    \end{split}
\end{equation*}
Finally, let $R\in\R$ and $R\geq 2$. We denote by $[R]\coloneqq \max\{x\in \N\colon x\leq R\}$ the \textit{integer part of $R$}. Then we have 
\begin{equation*}
\operatorname{Vol}(B_{[R]})\leq \operatorname{Vol}(B_R)\leq \operatorname{Vol}(B_{[R]+1}),
\end{equation*}
which readily implies that $\operatorname{Vol}(B_R)\asymp R$ and $\operatorname{Vol}(B_{2R}\setminus B_R)\asymp R$.

Thus, for $v\equiv 1$ and $\sigma>1$, by Theorem \ref{t: mainthm} the inequality $$\Delta u +v u^{\sigma}\leq 0 \quad \text{in }\; \mathbb{T}$$ has no nontrivial nonnegative solutions.
\end{example}

\section{Optimality of the volume growth condition}\label{optimal}

The next result, in conjunction with Example \ref{ex1}, shows that Theorem \ref{t: mainthm} is sharp with respect to $\sigma$. 
\begin{theorem}\label{teooptimal} 
Let $(\mathbb{Z}^N,E, \mu)$ be as in Example \ref{ex1}. 
    Let $N>2$, $\e>0$, and $\sigma=\frac{N}{N-2}+\e$. Then there exists a non-trivial positive solution $u$ of $$\Delta u + u^\sigma \leq 0 \quad \text{ in }\;\, \mathbb{Z}^N.$$ 
\end{theorem}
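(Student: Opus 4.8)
The plan is to exhibit an explicit radial supersolution, mimicking the classical Euclidean construction. On $\mathbb{R}^N$ the function $|x|^{-\gamma}$ solves $\Delta u + u^\sigma \leq 0$ precisely when $\tfrac{2}{\sigma-1}\leq \gamma < N-2$, a range which is nonempty exactly in the supercritical regime $\sigma > \tfrac{N}{N-2}$. Accordingly, I would set
\[
u(x) := (|x|^2 + b)^{-a},
\]
where $a$ is chosen in the open interval $\left(\tfrac{1}{\sigma-1}, \tfrac{N-2}{2}\right)$ — nonempty since $\sigma-1 = \tfrac{2}{N-2}+\varepsilon$ forces $\tfrac{1}{\sigma-1} < \tfrac{N-2}{2}$ — and $b>0$ is a large shift parameter to be fixed at the end. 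Such $u$ is automatically positive on all of $\mathbb{Z}^N$, so the only issue is to verify the differential inequality $-\Delta u \geq u^\sigma$.

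First I would estimate $\Delta u$ pointwise. Writing $t = |x|^2$ and $g(t) = (t+b)^{-a}$, for each coordinate direction $e_k$ the second difference is $g(t+2x_k+1)+g(t-2x_k+1)-2g(t)$, which I would expand by Taylor's formula to second order with an explicit third-order remainder. Summing over $k$ and using $\sum_k x_k^2 = t$ yields
\[
\Delta u(x) = g'(t) + \frac{4t+N}{2N}\,g''(t) + \frac{1}{2N}\sum_{k} r_k,
\]
where the leading part equals $a\big[-1 + \tfrac{(4t+N)(a+1)}{2N(t+b)}\big](t+b)^{-a-1}$. The bracket is negative for every $t\geq 0$ once $b > N/4$ (which makes it increasing in $t$), and it stays bounded away from zero precisely because $a < \tfrac{N-2}{2}$ forces its limit $-1 + \tfrac{2(a+1)}{N}$ as $t\to\infty$ to be strictly negative; this is the discrete analogue of the superharmonicity condition $\gamma < N-2$.

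Next I would control the remainder. Since every Taylor node has argument at least $(|x|-1)^2+b \geq \tfrac12(|x|^2+b)$ for $b\geq 2$, one gets a bound of order $(t+b)^{-a-3}$ on $|g'''|$ there, and since the increments are $O(|x|)=O(t^{1/2})$ the remainder is of order $(t+b)^{-a-3/2}$, which decays faster than the leading term $(t+b)^{-a-1}$. Hence for $b$ large enough the negative leading term dominates uniformly in $x$, giving $-\Delta u(x) \geq c_1 (|x|^2+b)^{-a-1}$ on all of $\mathbb{Z}^N$ with $c_1>0$ independent of $b$. Finally, since $a\sigma - a - 1 > 0$ by the choice $a > \tfrac{1}{\sigma-1}$, I have $u^\sigma(x) = (|x|^2+b)^{-a\sigma} \leq b^{-(a\sigma-a-1)}(|x|^2+b)^{-a-1}$, so enlarging $b$ until $b^{-(a\sigma-a-1)} \leq c_1$ yields $-\Delta u \geq u^\sigma$ everywhere, completing the proof.

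I expect the main obstacle to be exactly the passage from the Euclidean heuristic to a genuinely global estimate on the graph: the continuous argument only dictates the correct exponents and the asymptotic sign of $\Delta u$, whereas here the inequality must hold at every vertex, including near the origin where the discrete second differences are poorly approximated by derivatives. The device that resolves this is the large shift $b$, which simultaneously pushes all Taylor nodes into the regime where the negative leading term controls the third-order remainder, and makes the constant in the comparison between $u^\sigma$ and $-\Delta u$ as small as needed. Keeping track of the fact that $c_1$ does not degenerate as $b\to\infty$ is the one point requiring genuine care.
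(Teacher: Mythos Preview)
Your argument is correct and follows the same strategy as the paper: construct an explicit radial supersolution of the form $(|x|^2+\text{shift})^{-\text{power}}$, Taylor–expand to third order, and absorb the discrete remainder by taking the shift large. The only difference is in the bookkeeping of parameters: the paper fixes the exponent at the borderline value $\gamma=\tfrac{1}{\sigma-1}$ (so that $\gamma\sigma=\gamma+1$) and introduces an additional small multiplicative constant $\delta$ to close the inequality, whereas you take the exponent $a$ strictly inside $\big(\tfrac{1}{\sigma-1},\tfrac{N-2}{2}\big)$ and exploit $a\sigma>a+1$ so that the shift $b$ alone suffices. Both variants lead to the same Taylor computation and the same remainder control; your care in noting that the leading constant $c_1$ does not degrade as $b\to\infty$ is exactly the point that makes the two–step choice of $b$ legitimate.
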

\begin{proof}
    Let $\varphi\colon \R^+\to \R^+$ be defined by
    \begin{equation*}
        \varphi(t)=\frac{\delta}{(K+t)^\gamma}
    \end{equation*}
    where $\gamma=\frac{1}{\sigma-1}$ and the parameters $\delta >0$ and $K>0$ will be chosen small and large enough respectively. Let $u\colon \mathbb{Z}^N\to \R$ be defined by
    \begin{equation*}
    u(x)=\varphi(|x|^2)=\frac{\delta}{(K+|x|^2)^{\gamma}},
    \end{equation*}
   where $\displaystyle{|x|^2\coloneqq \sum_{i=1}^N x_i^2.}$ By the Taylor expansion formula, 
$$ \p(t)= \p(r) + \p'(r)(t-r) + \frac{\p''(r)}{2}(t-r)^2 + \frac{\p'''(\eta)}{6}(t-r)^3, $$
 with $\eta$ between $t$ and $r$.  Then, we obtain
\begin{equation}\label{eq: sviluppoDeltau}
    \begin{split}
    \Delta u(x) = \frac{1}{2N} \sum_{y\sim x} &\left[-\frac{\gamma \delta}{(K+|x|^2)^{\gamma+1}} (|y|^2-|x|^2) + \frac{\gamma(\gamma+1) \delta}{2(K+|x|^2)^{\gamma+2}} (|y|^2-|x|^2)^2\right.\\
    &\left.\,\, - \frac{\gamma(\gamma+1)(\gamma+2) \delta}{6(K+\eta)^{\gamma+3}} (|y|^2-|x|^2)^3 \right],
    \end{split}
\end{equation}
  with $\eta$ between $|y|^2$ and $|x|^2$. Now we recall that if $y\sim x$ we have $x=(x_1,\dots, x_N)$ and $y=(x_1,\dots, x_k\pm 1,\dots x_N)$ for some $k\in \{1,\dots, N\}$. Then we obtain
\begin{equation*}
    |y|^2-|x|^2=(|x|^2\pm 2x_k + 1) - |x|^2 = \pm 2x_k +1.
\end{equation*}
Thus
\begin{equation*}
\begin{split}
\sum_{y\sim x}(|y|^2-|x|^2)&=2N, \\
\sum_{y\sim x}(|y|^2-|x|^2)^2&= 8 |x|^2 +2N,\\
| (|y|^2-|x|^2)^3|&\leq C(|x|^3+1),
\end{split}
\end{equation*}
for some $C>0$. By \eqref{eq: sviluppoDeltau}, 
\begin{equation*}
\begin{split}
    \Delta u(x) + u^{\sigma}(x) &=-\frac{\gamma \delta}{(K+|x|^2)^{\gamma+1}}  + \frac{\gamma(\gamma+1) \delta}{(K+|x|^2)^{\gamma+2}} \frac{4|x|^2 + N}{2N}\\
    & \,\,\,\,\;\, -\frac{1}{2N}\sum_{y\sim x} \frac{\gamma(\gamma+1)(\gamma+2) \delta}{6(K+\eta)^{\gamma+3}} (|y|^2-|x|^2)^3 + \frac{\delta^\sigma}{(K+|x|^2)^{\gamma\sigma}}\\
    &= \frac{\delta}{(K+|x|^2)^{\gamma+2}}\bigg[-\gamma (K+|x|^2) +\gamma(\gamma+1)\frac{4|x|^2 + N}{2N} \\
    &\hspace{3,3cm} +\delta^{\sigma-1}(K+|x|^2) + \theta(x) \bigg]\\
    &= \frac{\delta}{(K+|x|^2)^{\gamma+2}}\bigg[-\left(\gamma - \frac{2\gamma(\gamma+1)}{N} - \delta^{\sigma-1} \right)|x|^2\\
    &\hspace{3,3cm} - \left(\gamma K -\frac{\gamma(\gamma+1)}{2} - \delta^{\sigma-1}K\right) + \theta(x) \bigg].
\end{split}
\end{equation*}
where we have used that $\gamma \sigma=\gamma+1$ and where
\begin{equation*}
    \theta(x) = -\frac{(K+|x|^2)^{\gamma+2}}{12N}\sum_{y\sim x} \frac{\gamma(\gamma+1)(\gamma+2)}{(K+\eta)^{\gamma+3}} (|y|^2-|x|^2)^3.
\end{equation*}
We set $2\lambda \coloneqq \gamma - \frac{2\gamma(\gamma+1)}{N} $. Then, since $\gamma=\frac{1}{\sigma-1}$ and $\sigma=\frac{N}{N-2} +\varepsilon$, we get $\lambda>0$ and by choosing $0<\delta \leq \min\{\lambda^{\frac{1}{\sigma-1}}, (\frac{\gamma}{2})^{\frac{1}{\sigma-1}}\}$, we obtain
\begin{equation*}
    \begin{split}
      \Delta u(x) + u^{\sigma}(x)&\leq  \frac{\delta}{(K+|x|^2)^{\gamma+2}}\bigg[-\lambda|x|^2- \left(\frac{\gamma}{2}K -\frac{\gamma(\gamma+1)}{2} \right) + \theta(x) \bigg]\\
      &\leq -\frac{\delta}{(K+|x|^2)^{\gamma+2}}[\lambda|x|^2+C_0 - \theta(x) ],
    \end{split}
\end{equation*}
with $C_0>0$ arbitrarily large, up to choosing $K\geq \frac{2C_0}{\gamma} + \gamma+1$. We now estimate $\theta(x)$. For $y\sim x$, by construction we have 
$$ \min\{|x|^2,|y|^2\}\leq \eta\leq \max\{|x|^2,|y|^2\}.$$
Since $|x|^2\in\mathbb{N}$, for $|x|\geq1$ we have $|y|\geq|x|-1$ and hence $\eta\geq (|x|-1)^2\geq \frac 16 (|x|^2-1)$. Clearly also for $|x|=0$ one has $\eta\geq\frac 16 (|x|^2-1)$. Thus
\begin{equation*}
\begin{split}
    |\theta(x)|&\leq C (K+|x|^2)^{\gamma+2} \sum_{y\sim x} \frac{1}{(K+\eta)^{\gamma+3}} ||y|^2-|x|^2|^3\\
    &\leq C (K+|x|^2)^{\gamma+2} \frac{|x|^3+1}{(6K-1+|x|^2)^{\gamma+3}}\\
    & \leq C\frac{|x|^3 + 1}{K+|x|^2}\leq C(|x|+1).
\end{split}
\end{equation*}
Hence, 
\begin{equation*}
    \begin{split}
      \Delta u(x) + u^{\sigma}(x)&\leq  -\frac{\delta}{(K+|x|^2)^{\gamma+2}} \left[\lambda |x|^2 + C_0 - C(|x| +1)\right]\leq 0,
    \end{split}
\end{equation*}
up to choosing $C_0\geq 0$, and hence $K\geq 0$, large enough. Therefore $u$ is a positive solution of $$\Delta u + u^\sigma \leq 0 \quad \text{ in } \mathbb{Z}^N,$$ which concludes the proof.
\end{proof}

The next result is inspired by \cite[Theorem 4.1]{GHS}, and it shows that the volume growth assumption \eqref{i: volumegrowth} in Theorem \ref{t: mainthm} is sharp. 

\begin{theorem}\label{teo2optilmal}
Let  $\mathbb{T}_N$ be a homogeneous tree of degree $N\geq 2$. Let $\sigma>1$. For every $\e>0$, there exist $\omega\colon \mathbb{T}_N\times \mathbb{T}_N\to [0, +\infty)$ and $\mu\colon \mathbb{T}_N\to [0, +\infty)$ and a constant $C>0$ such that
\begin{romanenumerate}
    \item\label{i: sommaomega} for every $x\in \mathbb{T}_N$ 
    \begin{equation*}
    \sum_{y\sim x}\omega_{x y}= \mu(x);
    \end{equation*}
    \item\label{i: treevolumegrowth}  for every $n\in\N$
    \begin{equation*}
        \sum_{x\in B_{n}(0)} \mu(x)\asymp n^{\frac{2\sigma}{\sigma-1}+\e};
    \end{equation*}
    \item\label{i: treelaplacianestimate} for every $x\in \mathbb{T}_N$
    \begin{equation*}
        \Delta d(x,0)\leq \frac{C}{d(x,0)};
    \end{equation*}
    \item\label{i: treesolution} there exists a positive solution $u$ to $$\Delta u+u^{\sigma}\leq 0 \quad \text{ in } \; \mathbb{T}_N\,.$$
\end{romanenumerate}
\end{theorem}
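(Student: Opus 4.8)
The plan is to equip $\mathbb{T}_N$ with \emph{radial} weights, so that the graph Laplacian, applied to functions of $d(\cdot,0)$ alone, collapses to a one-dimensional Jacobi-type operator on the levels $D_n$; on this reduced operator I will reproduce the radial supersolution $r^{-2/(\sigma-1)}$ of the continuous equation in the fractional dimension $M:=\siii+\e$. The point is that $M$ has continuous critical exponent $\frac{M}{M-2}<\sigma$, so $\sigma$ is supercritical and a positive supersolution should exist, while the volume still grows like $R^{M}=R^{\siii+\e}$, just above the threshold of Theorem~\ref{t: mainthm}.

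Concretely, recall that $|D_0|=1$, $|D_1|=N$ and $|D_n|=N(N-1)^{n-1}$ for $n\ge1$, so the number of edges joining $D_n$ to $D_{n+1}$ is $|D_{n+1}|$. First I would fix the \emph{conductances} $c_n:=(K+n)^{M-1}$ (with $K>1$ to be chosen) and declare every edge between levels $n$ and $n+1$ to have weight $a_n:=c_n/|D_{n+1}|$; then I set $\mu(x):=\sum_{y\sim x}\omega_{xy}$, so that \eqref{i: sommaomega} holds by definition and $(\mathbb{T}_N,\omega,\mu)$ is a legitimate weighted graph satisfying \eqref{e7f}. A short computation gives, for $x\in D_n$ with $n\ge1$, that $|D_n|\,\mu(x)=c_{n-1}+c_n$ and that for any radial $f(x)=\phi(d(x,0))$
\[
\Delta f(x)=\frac{1}{c_{n-1}+c_n}\Big[c_{n-1}\big(\phi(n-1)-\phi(n)\big)+c_n\big(\phi(n+1)-\phi(n)\big)\Big].
\]
Summing $|D_k|\,\mu=c_{k-1}+c_k$ over $k\le n$ telescopes to $\Vol(B_n)\asymp\sum_{k\le n}k^{M-1}\asymp n^{M}=n^{\siii+\e}$, which is \eqref{i: treevolumegrowth}. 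Taking $\phi(n)=n$ yields $\Delta d(x,0)=\frac{c_n-c_{n-1}}{c_n+c_{n-1}}$, and since $M>2$ the mean value theorem bounds this by $C/n$, giving \eqref{i: treelaplacianestimate} with $\alpha=1$ (at the root $d=0$ the inequality is read trivially).

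The heart of the argument is \eqref{i: treesolution}. I would try $u(x)=\delta\,(K+n)^{-\beta}$ with $\beta:=\sii$, so that $\beta+2=\beta\sigma=M-\e$ and hence $u^\sigma=\delta^\sigma(K+n)^{-\beta-2}$. Writing $\phi(t)=(K+t)^{-\beta}$, Taylor expansion of $\phi(n\pm1)$ together with the expansion $\frac{c_n-c_{n-1}}{c_n+c_{n-1}}=\frac{M-1}{2(K+n)}+O\big((K+n)^{-2}\big)$ turns the displayed radial Laplacian into
\[
\Delta\phi(n)=\frac{\beta}{2}\big(\beta+2-M\big)(K+n)^{-\beta-2}+O\big((K+n)^{-\beta-3}\big)=-\frac{\beta\,\e}{2}(K+n)^{-\beta-2}+O\big((K+n)^{-\beta-3}\big),
\]
the leading coefficient being negative precisely because of the extra $\e$. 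Choosing $K$ large absorbs the $O\big((K+n)^{-\beta-3}\big)$ error into half of the leading term, so that $\Delta\phi\le-\tfrac{\beta\e}{4}(K+n)^{-\beta-2}$ for every $n$; then $\Delta u+u^\sigma\le\delta(K+n)^{-\beta-2}\big(-\tfrac{\beta\e}{4}+\delta^{\sigma-1}\big)\le0$ once $\delta$ is small enough, producing the desired positive solution.

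The main obstacle is exactly this last step: making the remainder estimates in the discrete Laplacian \emph{uniform in $n$} (including the finitely many low levels and the root) and verifying that the leading coefficient $\beta+2-M=-\e$ is negative. This is where the supercriticality encoded by $\e>0$ is used, and it is precisely what forces the volume exponent to sit just above the critical value $\siii$ of Theorem~\ref{t: mainthm}; everything else is bookkeeping about the tree combinatorics and routine Taylor/mean-value estimates, entirely parallel to the computation in Theorem~\ref{teooptimal}.
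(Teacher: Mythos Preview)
Your construction is correct and is essentially the same as the paper's: with $M-1=\si+\e$ your conductances $c_n=(K+n)^{M-1}$ coincide (up to an immaterial multiplicative constant) with the paper's edge weights $\omega_n=(n+n_0)^{\si+\e}/(N-1)^n$, your trial function $u(x)=\delta(K+n)^{-\sii}$ is the paper's $u_n$, and the verifications of (i)--(iv) follow the same computations. The only stylistic difference is that you organise the calculation through the effective dimension $M$ and a Taylor expansion of the radial Laplacian, whereas the paper carries out the equivalent algebra by hand and treats the root as a separate case; the content is the same.
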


\begin{proof}
    Let $n\in \N$ and $x\in D_n$, we define
    \begin{equation}\label{e: utree}
        u(x)=u_n=\frac{\delta}{(n+n_0)^\frac{2}{\sigma-1}}.
    \end{equation}
    For $n\in\N$ and $(x,y)\in E_n$, we define
    \begin{equation}\label{e: muxy}
\omega_{xy}=\omega_n=\frac{(n+n_0)^{\frac{\sigma+1}{\sigma-1}+\e}}{(N-1)^n},
    \end{equation}
 while if $y\not\sim x$ we define $\omega_{xy}=0$. Here $n_0 \in \N$ is large enough and $\delta>0$ is small enough (see the computation below). Defining $$\mu(x)\coloneqq \sum_{y\sim x}\omega_{xy},$$ \eqref{i: sommaomega} holds. Therefore, by \eqref{e: muxy}, for every $x\in D_n$, we  get
\begin{equation}
    \mu(x)=(N-1)\omega_n + \omega_{n-1}\qquad\text{for }n\geq1
\end{equation}
and $\mu(0)=N\omega_0$. Hence, by taking $n_1>1$ large enough, for every $x\in D_n$ and $n\geq n_1$ we have
\begin{equation*}
    \begin{split}
        \Delta d(x,0)&=\sum_{x\sim y}\frac{\omega_{xy}}{\mu(x)}[d(y,0) - d(x,0)]=\frac{(N-1)\omega_n - \omega_{n-1}}{(N-1)\omega_n + \omega_{n-1}}\\
        &=\frac{(n+n_0)^{\si+\e}-(n-1+n_0)^{\si+\e}}{(n+n_0)^{\si+\e}+(n-1+n_0)^{\si+\e}}=\frac{1 - (1-\frac{1}{n+n_0})^{\si+\e}}{1 +(1-\frac{1}{n+n_0})^{\si+\e}} \\
        &= \frac{1}{2+ o(1)}\left[\left(\si + \e\right)\frac{1}{n+n_0}+o\left(\frac{1}{n}\right)\right]\leq \frac{C}{n}.
    \end{split}
\end{equation*}
Thus \eqref{i: treelaplacianestimate} follows. Now we are going to prove \eqref{i: treevolumegrowth}. Let $n>n_1$
\begin{equation*}
\begin{split}
    \sum_{x\in B_n(0)}\mu(x)&=\sum_{i=0}^n\sum_{x\in D_i}\mu(x)\\    
    &=\sum_{i=1}^n N(N-1)^{i-1}[(N-1)\omega_i + \omega_{i-1}] + N\omega_0\\
    &=\sum_{i=1}^n N\left[(n_0+i)^{\si+\e} + (i+n_0-1)^{\si+\e}\right] + n_0^{\si+\e}N\\
    &\asymp n^{\siii+\e},
\end{split}
\end{equation*}
where in the last passage we have used that $n_1>>n_0$. It remains to show \eqref{i: treesolution}. Since the function $u:\mathbb{T}_N\to \R$ defined by \eqref{e: utree} is positive, it remains to show that it is a solution of $$\Delta u + u^\sigma\leq 0\quad \text{ in }\; \mathbb{T}_N,$$ if we choose $\delta>0$ small and $n_0>0$ large enough. Let $x\in D_n$; we consider two cases: \textbf{(a)} $n=0$, and \textbf{(b)} $n>0$.

\smallskip

\textbf{(a)} For $x=0$  the inequality $\Delta u(0) +u^\sigma(0)\leq 0$ is equivalent to $u_1-u_0+u_0^{\sigma}\leq 0$. Therefore we get
\begin{equation*}
    \frac{\delta}{(1+n_0)^{\sii}}-\frac{\delta}{n_0^{\sii}}+\frac{\delta^\sigma}{n_0^{\siii}}\leq 0.
\end{equation*}
This is equivalent to
\begin{equation*}
    \delta^{\sigma-1}\leq n_0^{\siii}\left(\frac{1}{n_0^{\sii}}-\frac{1}{(1+n_0)^{\sii}}\right)=n_0^2\left[\frac{(1+n_0)^{\sii}-n_0^{\sii}}{(1+n_0)^{\sii}}\right]\coloneqq\Lambda.
\end{equation*}
Notice that the right hand side is positive and independent on $n$.

\smallskip

\textbf{(b)} Similarly as the previous case, let $x\in D_n$. The inequality $\Delta u(x) +u^\sigma(x)\leq 0$ is equivalent to
\begin{equation*}
    \frac{(N-1)\omega_n u_{n+1}+ \omega_{n-1} u_{n-1}}{(N-1)\omega_n + \omega_{n-1}} - u_n + u_n^\sigma\leq 0.
\end{equation*}
This is equivalent to
\begin{equation*}
\frac{(n+n_0)^{\si+\e}\frac{\delta}{(n+n_0+1)^{\sii}} + (n+n_0 -1)^{\si+\e}\frac{\delta}{(n+n_0-1)^{\sii}}}{(n+n_0)^{\si+\e} + (n+n_0 -1)^{\si+\e}} - \frac{\delta}{(n+n_0)^{\sii}} + \frac{\delta^{\sigma}}{(n+n_0)^{\siii}} \leq 0.
\end{equation*}
In order to simplify the computation we will use the notation $k\coloneqq n+n_0$. Hence the previous formula is equivalent to 
\begin{equation*}
\frac{k^{\si+\e}\frac{\delta}{(k+1)^{\sii}} + (k -1)^{\si+\e}\frac{\delta}{(k-1)^{\sii}}}{k^{\si+\e} + (k -1)^{\si+\e}} - \frac{\delta}{k^{\sii}} + \frac{\delta^{\sigma}}{k^{\siii}} \leq 0.
\end{equation*}
By some standard computation, we get
\begin{equation*}
    \delta^{\sigma-1}\leq k^{\siii} \left[\frac{1}{k^{\sii}}-\frac{k^{\si+\e}(k-1)^{\sii}+(k+1)^{\sii}(k-1)^{\si+\e}}{[k^{\si+\e}+(k-1)^{\si+\e}](k+1)^{\sii}(k-1)^{\sii}}\right].
\end{equation*}
Then, we get
\begin{equation*}
\delta^{\sigma-1}\leq k^2\left[\frac{(1-\frac{1}{k})^{\sii}[(1+\frac{1}{k})^{\sii} - 1] + (1-\frac{1}{k})^{\si+\e} (1+\frac{1}{k})^{\sii}[(1-\frac{1}{k})^{\sii} - 1]}{[1+(1-\frac{1}{k})^{\si+\e}](1-\frac{1}{k})^{\sii}(1+\frac{1}{k})^{\sii}}\right].
\end{equation*}
The denominator converges to $2$, hence for $n_0$, and thus for $k$, large enough it can be bound above and below by positive constants. Applying the formula $$(1+x)^{\alpha}=1+\alpha x + \frac{\alpha(\alpha-1)}{2} x^2 +o(x^2)$$ several times to the numerator, we get 
\begin{equation*}
\begin{split}
    &\left(1-\frac{1}{k}\right)^{\sii}\left[\left(1+\frac{1}{k}\right)^{\sii} - 1\right] + \left(1-\frac{1}{k}\right)^{\si+\e} \left(1+\frac{1}{k}\right)^{\sii}\left[\left(1-\frac{1}{k}\right)^{\sii} - 1\right]\\
    &=\frac{1}{k^2}\sii\left[-\sii + \frac{1}{2}\left(\sii-1\right) + \si + \e -\sii +\frac{1}{2}\left(\sii - 1\right)\right]+o\left(\frac{1}{k^2}\right)\\
    &=\frac{1}{k^2}\sii\left[\frac{3-\sigma}{\sigma-1} - \frac{4}{\sigma-1} + \si +\e\right]+o\left(\frac{1}{k^2}\right)\\
    &=\frac{\e}{k^2}\sii+o\left(\frac{1}{k^2}\right).
\end{split}
\end{equation*}
Therefore for $n_0$, and thus for $k$, large enough we get
\begin{equation*}
     C_1\e\leq k^2\left[\frac{(1-\frac{1}{k})^{\sii}[(1+\frac{1}{k})^{\sii} - 1] + (1-\frac{1}{k})^{\si+\e} (1+\frac{1}{k})^{\sii}[(1-\frac{1}{k})^{\sii} - 1]}{[1+(1-\frac{1}{k})^{\si+\e}](1-\frac{1}{k})^{\sii}(1+\frac{1}{k})^{\sii}}\right]\leq C_2\e
\end{equation*}
for some constants $C_1,C_2>0$. Hence \eqref{i: treesolution} is satisfied by the function $u$ defined in \eqref{e: utree}, up to choosing $n_0$ large enough and $0<\delta^{\sigma-1}<\min\{\Lambda,C_1\e\}$.
\end{proof}


\end{document}